\newtheorem{theorem}{Theorem}
\newtheorem{proposition}[theorem]{Proposition}
\theoremstyle{definition}
\newtheorem{remark}[theorem]{Remark}
\newcommand{\CC}{\mathbb{C}}
\newcommand{\NN}{\mathbb{N}}
\newcommand{\RR}{\mathbb{R}}
\newcommand{\textblue}[1]{{\color{blue}{#1}\color{black}}}
\newcommand{\textgreen}[1]{{\color{green}{#1}\color{black}}}
\newcommand{\textred}[1]{{\color{red}{#1}\color{black}}}
\title{A control model for zygodactyl bird's foot}
\author{Anna Chiara Lai \and Paola Loreti}
\keywords{Avian feet, zygodactyl feet, robotic finger, discrete control theory, Iterated Function Systems}
\subjclass{58F12,93A30,92B05}
\begin{document}
\maketitle 

\begin{center}\small
\vskip-0.5cm
\email{anna.lai, paola.loreti@sbai.uniroma1.it}\\
  Dipartimento di Scienze di Base e Applicate per l'Ingegneria\\Sapienza Universit\`a di Roma
\end{center}

\begin{abstract} 
 In this paper we are interested to the  zygodactyly phenomenon in birds, and in particolar in parrots.
This  arrangement, common in  species living on trees,   is a distribution  of  the foot   with two toes facing forward and two back. 
We give a model for the foot,  and thanks to the methods of iterated function system we 
are able to describe the reachability set. Moreover we give a necessary and sufficient condition for the grasping problem.
Finally we introduce a hybrid dynamical system modeling owl's foot in various stages of hunting (flying, attack, grasp). 
\end{abstract}
\section{Introduction}
In this paper we are interested to the  zygodactyly phenomenon in birds, and in particolar in parrots.
This  arrangement, common in  species living on trees,   is a distribution  of  the foot   with two toes facing forward and two back. 
We give a model for the foot,  and thanks to the methods of iterated function system we 
are able to describe the reachability set. Moreover we give a necessary and sufficient condition for the grasping problem.
Finally we introduce a hybrid dynamical system modeling owl's foot in various stages of hunting (flying, attack, grasp).

A discrete dynamical system models the position of the extremal
junction of every finger. A \emph{configuration} is a sequence of states of
the system corresponding to a particular choice for the controls,
while the union of all the possible states of the system is named
\emph{reachable set for the finger}. The closure of the reachable workspace is named
\emph{asymptotic reachable set}. 
Our model includes a control parameter on every phalanx of
every finger of the robot hand, ruling the angle between the current phalanx and the
previous one. Such an angle ranges in $[\pi-\omega,\pi]$, where $\omega$ is a fixed quantity describing the maximal rotation. Note that when the angle is  $\pi$ the phalanx
is consecutive to the previous. We assume a constant ratio $\rho$ between the lengths of two consecutive phalanxes. The structure of the finger ensures the set of possible configurations to be self-similar. In particular the sub-configurations can be looked at as scaled miniatures
with constant ratio $\rho$, named \emph{scaling factor}, of the whole structure. This is the key idea underlying our model and our main tool of investigation.

Biomechanics of avian foot, in particular in the case of arboreal birds, is widely investigated in the literature. 
 We refer to Norberg, (1986) and the references therein for a discussion on the mechanics and energetics of trunk climbing and grasping of the treecreeper.  Bock (1999) describes the morphology
of woodpeckers and the biomechanical analysis of climbing and perching. Zinoviev and Dzerzhinsky(2000) studied the forces acting on avian limbs on various stages of
locomotion, while Sustaita et al. (2013)  survey the tetrapod grasping in several clades, including birds. The above mentioned papers share a mechanical approach to the analysis of grasping and perching
capabilities of avian feet: forces acting on bird's foot are described  by considering in detail the whole skeleton-muscolar system. 
Our model is a simplified version of these systems, on the other hand the dynamical system we consider is indeed a control system: this yields the possibility of investigating at once all the physically
reasonable configurations of the foot. Moreover we shall show that a description of the reachable set of bird's foot can be obtained with an effortable computational cost. 

The discrete control theoretic approach in the investigation of limb's kinematics is common in robotics - among many others, 
we refer to the papers by Imme and Chirikjian (1996),  Lichter,  Sujan and Dubowsky (2002), Lai and Loreti (2012) for an overview on robotic fingers that are
investigated in a fashion similar to the one proposed in the present paper. 
Finally we note that the connection between the biomechanics of avian feet and robotics is an active research domain, mostly motivated by the fact that the lomocotion
of birds turned out to be more efficient with respect with human lomocotion - see for instance the project by Mederreg et al. (2003) where the lomocotion of birds is mimicked in a robotic device.

\section{Preliminaries: Iterated Function Systems}

An iterated function system (IFS) is a set of contractive functions $f_j:\mathbb C\to\mathbb C$. We recall that a function in a metric space $(X,d)$ is a contraction, if for every $x,y\in X$
$$d(f(x),f(y))< c\cdot d(x,y)$$
for some $c<1$.
 Hutchinson (1981) showed that every finite IFS, namely every IFS with finitely many contractions, 
admits a unique non-empty compact fixed point $R$ w.r.t the Hutchinson operator $$\mathcal F: S\mapsto \bigcup_{j=1}^J f_j(S)$$
 Moreover for every non-empty compact set $S\subseteq \mathbb C$
$$\lim_{k\to\infty} \mathcal F^k(S)=R.$$ 
The \emph{attractor} $R$ is a self-similar set and it is the only bounded set satisfying  $\mathcal F(R)=R$. This result was lately generalized to the case of infinite IFS (Mihail and Miculescu. (2009)).
 \section{The finger model on the complex plane}
Recall on physical parameters
\begin{itemize}
\item $\omega\in(0,2\pi)\setminus\{\pi\}$ is the greatest rotation of phalanxes;
\item $\rho>1$ is the scaling factor of phalanxes;
\end{itemize}
Reference parameters
\begin{itemize}
\item $x_0$ is the position of the initial junction of the finger, it is assumed for simplicity coinciding with the origin;
\item $v_0$ represents the initial orientation of the finger: in particular when no rotations are actuated, the finger forms a $-v_0\omega$ angle with the $x$-axis;
\end{itemize}
Control parameters
\begin{itemize}
\item $x_k$ is the position of the $k$-th junction on the complex plane;
\item $v_k\in R=\{r_0=0<a_1<\dots<r_n=1\}$ is the rotation control for the $k$-th phalanx;
\item rotations are modeled with complex exponentiation - see Figure \ref{grafico};
\end{itemize}
\begin{equation}\label{sys}
\begin{cases}
\displaystyle{x_k=x_{k-1}+\frac{1}{\rho^k} e^{-i\sum_{n=0}^{k} v_n~\omega}}\\
\displaystyle{x_0=0}
\end{cases}
\end{equation}
remark that $v_0\in \RR$ is not a control variable but it represent the initial orientation of the finger. 
\begin{figure}
\hskip-3cm
\begin{picture}(10,110)
\includegraphics[scale=0.6]{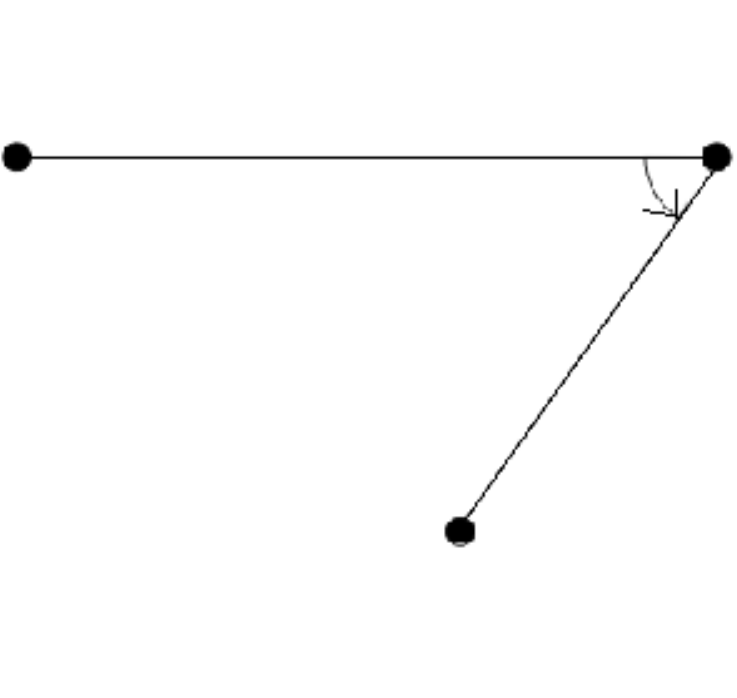}
\put(-130,100){$x_{k-1}$}
\put(-10,100){$x_{k}$}
\put(-60,20){$x_{k+1}$}
\put(-25,85){$\omega$}
\end{picture}
\caption{\label{grafico} In this figure the $k+1$-th rotation control $v_{k+1}$ is equal to $1$: this corresponds to rotate the $k+1$-th phalanx, whose junctions are $x_k$ and $x_{k+1}$ of an angle $\omega$ with respect
to the $k$-phalanx.}
\end{figure}
 The reachable set in time $k$ is 
$$R_k(\rho,\omega,R):=\left\{\sum_{j=1}^k\frac{1}{\rho^j}e^{-i\omega\sum_{n=0}^j v_n}\mid u_j\in E;v_n\in R\right\}.$$
We also define \emph{asymptotically reachable set} the following
\begin{align*}
 &R_\infty(\omega,\rho, R):=\overline{\bigcup_{k=0}^\infty R_k}\\
=&\displaystyle\left\{\sum_{i=1}^\infty\frac{1}{\rho^j}e^{-i\omega\sum_{n=0}^j v_n}\mid u_j\in E;v_n\in R\right\}
\end{align*}
where $\overline R$ denotes the closure of a set $R$. \\

Following proposition shows that the control system (\ref{sys}) and, in particular, its reachable sets are associated to the (possibly infinite) IFS 
$$\mathcal F(\rho,\omega,R):=\left\{f_v: x\mapsto \frac{e^{-i\omega v}}{\rho}(x+1)\mid  v\in R\right\}.$$
\begin{proposition}[Lai and Loreti (2012)]\label{p1}
For every $k\in\NN$ 
\begin{equation}\label{Rk}
R_k(\rho,\omega,R)=e^{-i\omega v_0}\mathcal F^k(\rho,\omega,R)(\{x_0\}). 
\end{equation}
Moreover the asymptotic reachable set satisfies
$$R_\infty(\rho,\omega,R)=e^{-i\omega v_0} R$$
where $R$ is the attractor of $\mathcal F(\rho,\omega,R)$.
\end{proposition}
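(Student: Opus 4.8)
The plan is to reduce the whole statement to one explicit formula for the iterates of the Hutchinson operator and then pass to the limit. First I would unwind the definition of $\mathcal F^k$ (writing $\mathcal F=\mathcal F(\rho,\omega,R)$ for brevity): since $\mathcal F(S)=\bigcup_{v\in R}f_v(S)$, a one-line induction gives $\mathcal F^k(\{x_0\})=\{f_{v_1}\circ\cdots\circ f_{v_k}(x_0)\mid v_1,\dots,v_k\in R\}$. Hence it suffices to evaluate an arbitrary composition of the affine maps $f_v(x)=\rho^{-1}e^{-i\omega v}(x+1)$ at $x_0=0$.

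The core step is the identity
\[ f_{v_1}\circ\cdots\circ f_{v_k}(0)=\sum_{j=1}^k \rho^{-j}\,e^{-i\omega(v_1+\cdots+v_j)}, \]
which I would prove by induction on $k$. The case $k=1$ is the definition of $f_{v_1}$. For the step, write the composition as $f_{v_1}(y)$ with $y=f_{v_2}\circ\cdots\circ f_{v_k}(0)$, apply the hypothesis to the shorter string $(v_2,\dots,v_k)$, and expand $f_{v_1}(y)=\rho^{-1}e^{-i\omega v_1}+\rho^{-1}e^{-i\omega v_1}y$: the constant term is the $j=1$ summand, while multiplying the inductive sum by $\rho^{-1}e^{-i\omega v_1}$ raises every power of $\rho$ by one and prepends $v_1$ to every exponent, producing the remaining summands. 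Multiplying the identity by $e^{-i\omega v_0}$ and using $e^{-i\omega v_0}e^{-i\omega(v_1+\cdots+v_j)}=e^{-i\omega\sum_{n=0}^j v_n}$ matches the generic element of $R_k(\rho,\omega,R)$ term by term; combined with the unwinding of $\mathcal F^k$ this is exactly (\ref{Rk}).

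For the asymptotic identity I would start from the explicit infinite-sum form of $R_\infty$ recorded in the setup and factor out the common phase; since $z\mapsto e^{-i\omega v_0}z$ is an isometry of $\CC$ that commutes with unions and closures, this gives
\[ R_\infty=e^{-i\omega v_0}\Big\{\sum_{j=1}^\infty \rho^{-j}e^{-i\omega(v_1+\cdots+v_j)}\,\Big|\,v_i\in R\Big\}. \]
The whole statement then reduces to showing that the bracketed set equals the attractor $R$. By the core identity the depth-$k$ composition $f_{v_1}\circ\cdots\circ f_{v_k}(0)$ is the $k$-th partial sum of the corresponding series, and since each $f_v$ contracts by $\rho^{-1}$ the tail is bounded by $\sum_{j>k}\rho^{-j}=\rho^{-k}/(\rho-1)$, so every such series converges and is a genuine limit of points of $\mathcal F^k(\{0\})$.

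The identification of this set of infinite sums with $R$ is the step I expect to be the main obstacle. One inclusion is clean: for a fixed address the partial sums lie in $\mathcal F^k(\{0\})$, which converges to $R$ in Hausdorff distance by Hutchinson's theorem, so their distance to $R$ tends to $0$ and the limiting sum lies in the closed set $R$. The reverse inclusion---that \emph{every} point of $R$ admits such an address---is the delicate one, and I would obtain it from the surjectivity of the coding map $\pi:(v_i)\mapsto\sum_j \rho^{-j}e^{-i\omega(v_1+\cdots+v_j)}$, whose image is compact (continuous image of a compact space of addresses) and $\mathcal F$-invariant, hence equal to $R$ by the uniqueness part of Hutchinson's theorem (and, should $R$ be infinite, by its extension due to Mihail and Miculescu quoted in the preliminaries). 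I would deliberately route the argument through this coding-map description rather than through $\overline{\bigcup_k\mathcal F^k(\{0\})}$ directly, precisely because a finite-depth iterate $f_{v_1}\circ\cdots\circ f_{v_k}(0)$ need not itself belong to $R$ when $0\notin R$; the contractivity hypothesis $\rho>1$ is exactly what closes this gap and yields $R_\infty(\rho,\omega,R)=e^{-i\omega v_0}R$.
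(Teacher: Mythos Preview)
Your argument is correct and, at bottom, follows the same strategy as the paper's sketch: induction on $k$ for (\ref{Rk}), and for the asymptotic statement the uniqueness of a nonempty compact $\mathcal F$-invariant set. The only difference is one of packaging for the second part. The paper works directly with the set $e^{i\omega v_0}R_\infty$, observes it is compact, and verifies $\mathcal F$-invariance by the two inclusions $\mathcal F(S)\subseteq S$ and $S\subseteq\mathcal F(S)$ (which, unwound, is the same shift-of-address computation you perform). You instead identify $e^{i\omega v_0}R_\infty$ with the image of the coding map $\pi$ and check compactness and invariance for that image. Your route has the advantage of making the compactness honest---continuous image of the compact address space---and of flagging the genuine subtlety that $\mathcal F^k(\{0\})$ need not be contained in $R$; the paper's route is shorter because it takes the infinite-sum description of $R_\infty$ as given and does not unpack the coding. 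Either way the appeal to Hutchinson (or Mihail--Miculescu when $R$ is infinite) is the decisive step, and you have it.
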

\begin{proof}[Sketch of proof]
 Equality (\ref{Rk}) can be proved by induction on $k$. Since $e^{i\omega v_0}R_\infty(\rho,\omega)$ is a compact set, it is the attractor if and only if it is invariant with respect to $\mathcal F(\rho,\omega,R)$. This invariance property can be shown by double inclusion.
\end{proof}

\section{Zygodactyly bird's foot}
Zygodactyly bird's foot is composed by four pairwise opposable it occurs parrots, woodpeckers, cuckoos and some owls. 
The arrangement of the fingers varies with species, 
the case we take into account is described in Figure \ref{parrot}.\\
\begin{figure}
\begin{center}
\begin{picture}(110,300)
\put(35,0){\includegraphics[scale=0.28]{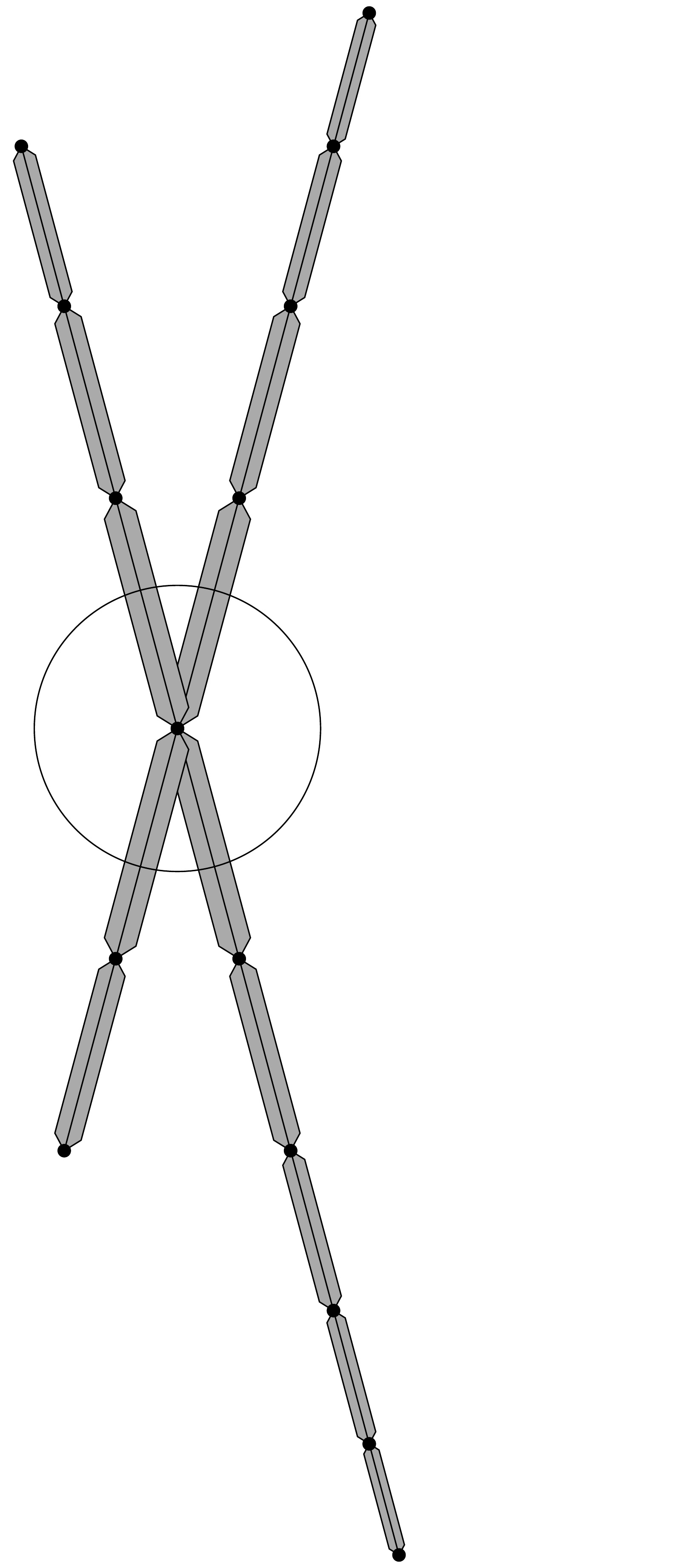}}
\put(65,120){\small $\frac{\pi}{6}$}
\put(65,195){\small $\frac{\pi}{6}$}
\put(100,155){\small $\frac{5}{6}\pi$}
\put(26,155){\small $\frac{5}{6}\pi$}
\put(-5,263){\small Finger 2}
\put(110,289){\small Finger 3}
\put(115,3){\small Finger 4}
\put(5,80){\small Finger 1}
\end{picture}
\end{center}
\caption{\label{parrot} Our model for a zygodactyl bird's foot - the scaling ratio $\rho$ is $1.2$.}
\end{figure}
The junctions of every finger are coplanar, we denote $p_i$, with $i=1,\dots,4$, the plane the $i$-th finger belongs to.
 All the planes of the fingers are assumed to be orthogonal to the $xy$-plane and we call $\Omega_i$, with $i=1,\dots,4$, the angle the plane $p_i$ forms with 
$xz$-plane. In our example $\Omega_1=\Omega_2=\pi/12$, $\Omega_3=\Omega_3=-\pi/12$, so that Finger 1 and Finger 3 (and Finger 2 and Finger 4) are coplanar. The initial rotation $v_0^i$ of the $i$-th finger is set to
$0$ for $i=1,3$, while for $v_0^2=v_0^4=\pi$. All fingers have in common their first junction, that, for seek of simplicity, coincides with the origin. We assume all fingers to have same scaling factor
(in particular $\rho$ is set equal to Golden Mean), the same maximal rotation angle $\omega$ and the same control set $R=[0,1]$, so that 
each phalanx of each finger of the bird's foot can rotate of an angle $v\omega\in[0,\omega]$.

We discuss the reachability of Finger 3, the other cases being similar. By Proposition \ref{p1}, the reachable set of the extremal junction of Finger 3, $R^3_4(\rho,\omega,R)$, can be obtained by following algorithm
\begin{enumerate}
 \item iterate $4$ times the IFS $\mathcal F(\rho,\omega,R)$ with initial datum $\{0\}$;
 \item apply the isomorphism between $\CC$ and $\RR^3\cap xz$-plane given by $x+iz\mapsto(x,0,z)$;
\item apply a rotation of $\Omega_3$ about the $z$-axis. 
\end{enumerate}
See Figure \ref{finger1} for some examples. 

\begin{figure}
\subfloat[$\omega=\pi/12$]{\includegraphics[scale=0.3]{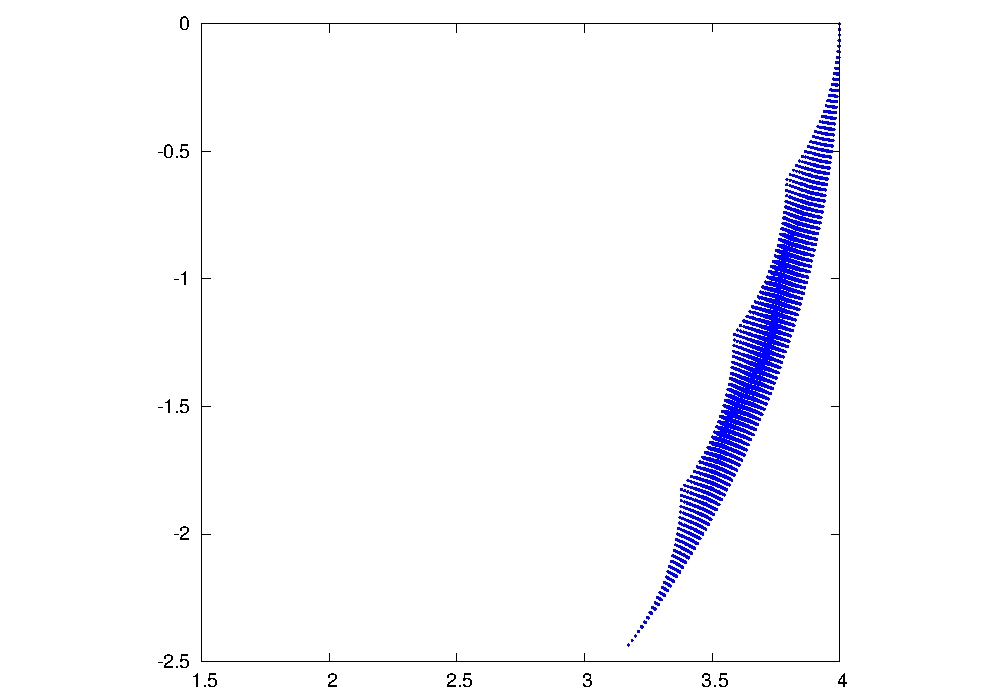}}
\subfloat[$\omega=\pi/6$]{\includegraphics[scale=0.3]{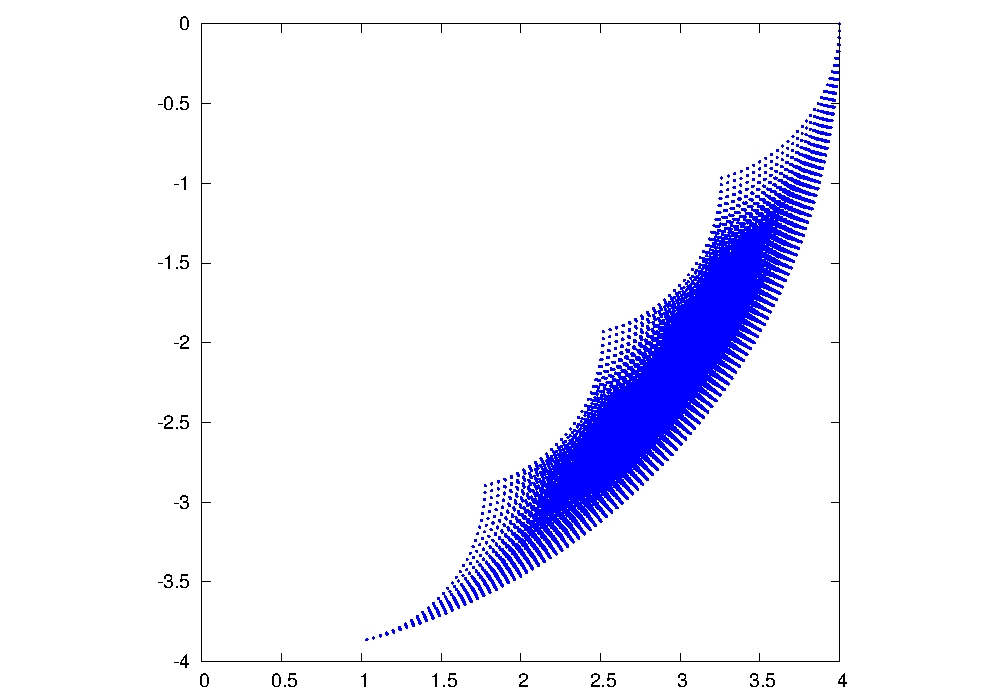}}
 \caption{\label{finger1} An approximation of $R^3_4(\rho,\omega,R)$ for $\omega=\pi/12,\pi/6$ and $\rho$ equal to the Golden Mean, obtained by an uniform discretization of the control set $R=[0,1]$. The consistency of this approximation is given by the continuity in Hausdorff metric
of the attractor of $\mathcal F(\rho,\omega,R)$ with respect to $R$ - see Lai and Loreti (2012).}
\end{figure}

\subsection{Perching on a branch}
We consider the ability of our model bird's foot to lay in a stable equilibrium on branch, modeled as a cylinder. We say that a configuration is \emph{stable on a branch} if at least two phalanxes 
of two opposable fingers are tangent to the cylinder, while by \emph{grasping configuration} we mean a stable configuration where at least a couple of tangent phalanxes have non-positive scalar product. 
Our aim is to describe the possible branches (i.e. cylinders) that can be laid on or grasped by a couple of fingers. We discuss the case of coplanar fingers, say  
Finger 1 and Finger 3, so that the problem can be set on the complex plane and reduces to consider the problem of being stable on/grasping an appropriate ellipse, namely the section of the cylinder-branch related to
the plane $p^1(=p^2)$ - see Figure \ref{plane}.

\begin{figure}
 \begin{picture}(200,170)
 \put(-30,0){\includegraphics[scale=1]{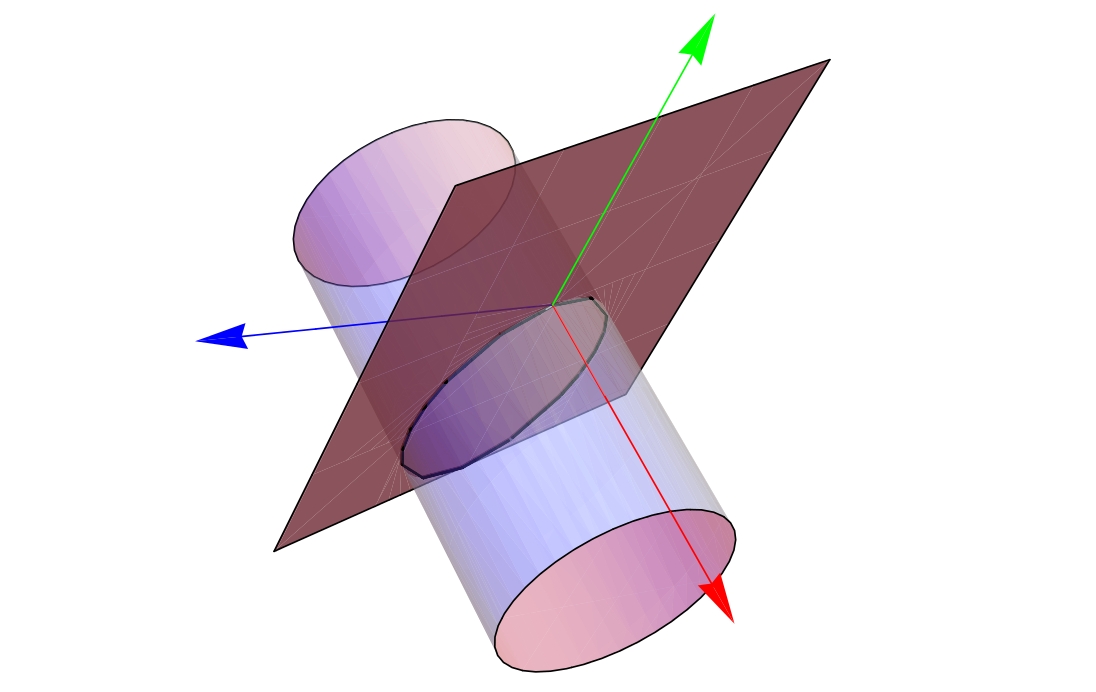}}
\put(150,20){\textred{$y$}}
\put(145,155){\textgreen{$z$}}
\put(165,130){$p^1$}
\put(20,75){\textblue{$x$}}
 \end{picture}
\caption{\label{plane} The cylinder represents a branch and it has radius $1$ and axis $x=0,~z=-1$. The plane $p^1$ is the plane Finger 1 and Finger 3 belong to. The black ellipse is the intersection between $p^1$ and the boundary of the cylinder-branch.} 
\end{figure}

The $k$-th phalanx of the $i$-th finger can be parametrized 
as follows
$$\phi^{(i)}(t;(v_k),k)=x_{k-1}^{(i)}+\frac{t^{(i)}}{\rho^k}e^{-i\sum_{n=0}^k v^{(i)}_nw}$$
with $t^{(i)}\in[0,1]$.

On the other hand the ellipse generated by the intersection of a cylinder (with axis parallel to $y$-axis) and $p^1$ has a generic center of the form $(0,c_y)$ and radii of the form $(r \Omega_1,r)$. 
It can be parametrized as follows 
\begin{align*}
 x(\theta;c_y,r)&= r\cos(\Omega_1) \cos(\theta)\\
 y(\theta; c_y,r)&= c_y+r \sin(\theta)
\end{align*}
and its tangent vector given by is
\begin{align*}
 \dot x(\theta;c_y,r)&= -r\cos(\Omega_1) \sin(\theta)\\
 \dot y(\theta;c_y,r)&= r \cos(\theta)
\end{align*}
Using the classical isomorphism $(x,y)\mapsto x+iy$ the ellipse on the complex plane may be parametrized by 
$$\gamma(\theta;c_y,r)= c_y+ r(\cos(\Omega_1) \cos(\theta)+i \sin(\theta)).$$ 
We set
$$\dot \gamma(\theta;c_y,r):= -r\cos(\Omega_1) \sin(\theta)+ i r \cos(\theta)$$ 
A phalanx with control sequence $(v_k)$ is tangent to an ellipse  on the complex plane
with center $ic_y$ and radii $(r \cos(\Omega_1),r)$ if and only if the following system of equations admits a solution $(t,\theta)$.
 \begin{equation}\label{stable}
\begin{cases}
\phi^{(i)}(t;(v_k),k) = \gamma(\theta; c_y,r) \quad & ~\text{ (Incidence Condition)}\\  
\arg(\phi^{(i)}(t;(v_k),k))=\arg(\dot \gamma(\theta;c_y,r)) \quad &~\text{ (Paralellism Condition)}\\
\omega\in[0,2\pi),~t\in[0,1]
\end{cases}
 \end{equation}
We introduce the scalar product on complex field $\cdot$, 
$$x\cdot y:=(\Re(x),\Im (x))\cdot (\Re(y),\Im (y)).$$
\begin{figure}
 \includegraphics[scale=0.4]{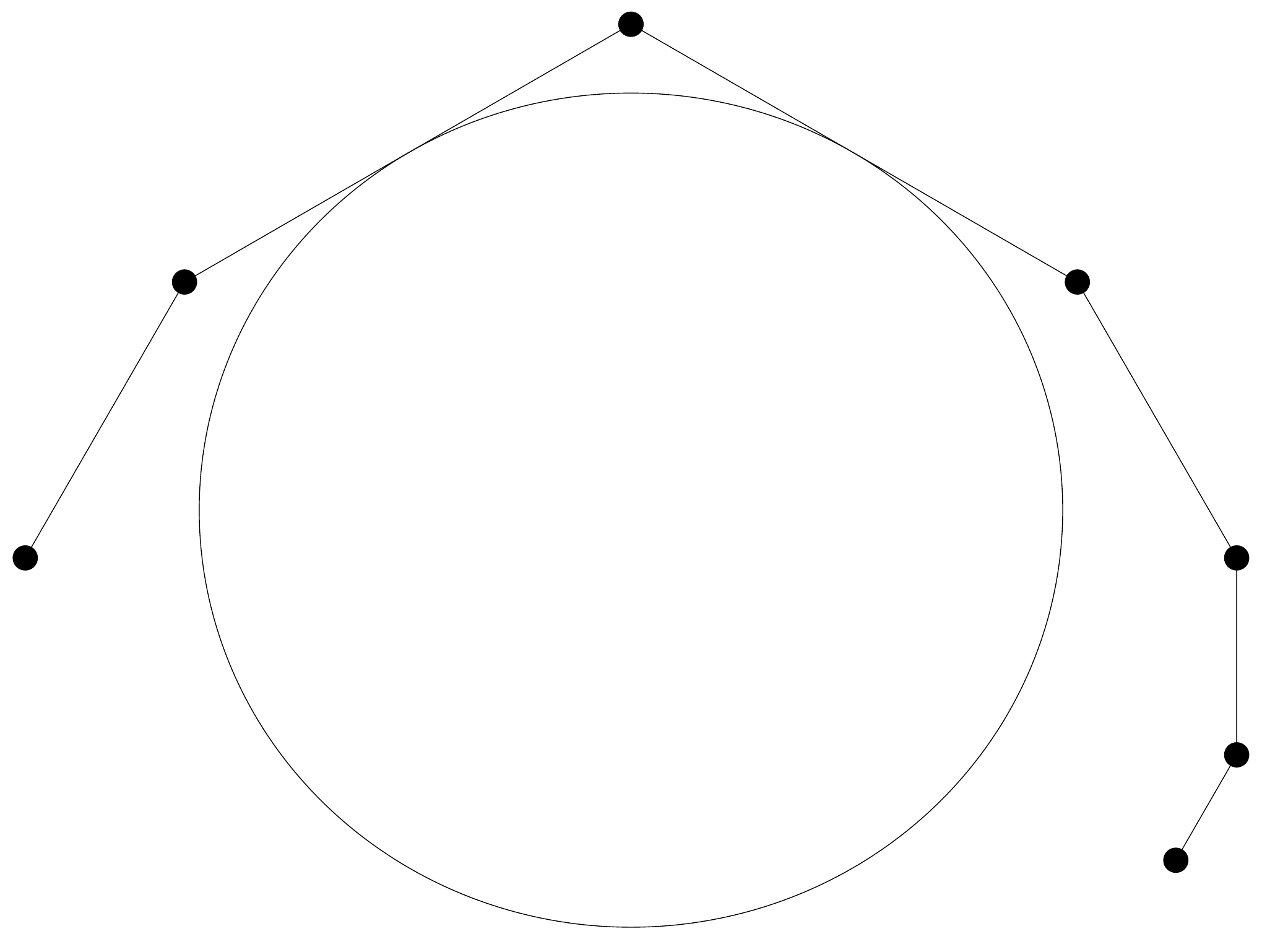}
\caption{A stable configuration using Finger 1 and Finger 3 with $\rho$ equal to the Golden Mean, $\omega=\pi/6$, $\Omega_1=\pi/12$. The control sequences are constantly equal to $1$, the bird's foot
lays on a cylinder of radius $r\sim 0.58$ with axis $x=0$, $z=-r$.}
\end{figure}

 By the reasonings above we have following proposition.
\begin{proposition}
 The bird's foot can be stable on a cylinder of center $c$ and radius $r$ using Finger 1 and Finger 3 if there exist two control sequences $(v^{(i)}_{k\leq K_i})$,  with $i=1,3$ and $K_1\leq 2$ and $K_3\leq4$, such that 
system (\ref{stable}) admits a solution.\\
If moreover $(v^{(i)}_{k\leq K_i})$, with $i=1,3$, also satisfy the grasping condition
\begin{equation}
\label{grasp}
 \left(\frac{t^{(1)}}{\rho^{K_1}}e^{-i\sum_{n=0}^{K_1} v^{(1)_n}w}\right) \cdot \left(\frac{t^{(3)}}{\rho^{K_3}}e^{-i\sum_{n=0}^{K_3} v^{(3)}_nw}\right) < 0  
\end{equation}
then the resulting configuration grasps the cylinder.
\end{proposition}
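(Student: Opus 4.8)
The plan is to read the statement as a dictionary translating the geometric definitions of a configuration \emph{stable on a branch} and of a \emph{grasping} configuration into the analytic conditions (\ref{stable}) and (\ref{grasp}). I would therefore split the argument into two equivalences: first, that solvability of system (\ref{stable}) is the same as tangency of one phalanx to the branch; second, that the sign condition (\ref{grasp}) is exactly the non-positivity of the scalar product between the two contact phalanxes required by the definition of grasping. Both assertions of the proposition then reduce to citing the relevant definition, once we exhibit, from the two control sequences, two phalanxes of two opposable fingers tangent to the cylinder.

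First I would establish the tangency characterization. The $K$-th phalanx of the $i$-th finger is the straight segment $t \mapsto \phi^{(i)}(t;(v_k),K)$, $t \in [0,1]$, joining $x_{K-1}^{(i)}$ to $x_K^{(i)}$, with constant direction $e^{-i\sum_{n=0}^{K} v_n^{(i)} \omega}$; its derivative in $t$ is this direction up to the positive factor $\rho^{-K}$, and this is the quantity the parallelism line of (\ref{stable}) compares, through $\arg$, with the tangent vector $\dot\gamma(\theta;c_y,r)$ of the ellipse. A segment is tangent to the ellipse exactly when it meets the ellipse (the incidence equation) with the two directions parallel at the contact point (the parallelism equation), while the constraint $t \in [0,1]$ forces the contact to occur on the physical phalanx and not on its prolongation. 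Hence the solvability of (\ref{stable}) is equivalent to tangency of the corresponding phalanx to $\gamma(\cdot;c_y,r)$.

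Next I would use the planar reduction fixed just before the statement: since the two chosen fingers are coplanar, both contained in $p^1$, and since $\gamma$ is by construction the intersection of the cylinder-branch with $p^1$, a segment lying in $p^1$ is tangent to the cylinder if and only if it is tangent to $\gamma$. Consequently a sequence $(v_k^{(1)})_{k \le K_1}$ with $K_1 \le 2$ solving (\ref{stable}) produces a phalanx of Finger~1 tangent to the cylinder, and a sequence $(v_k^{(3)})_{k \le K_3}$ with $K_3 \le 4$ produces a tangent phalanx of Finger~3, the bounds $K_1\le 2$ and $K_3\le 4$ merely restricting the contact to an actual phalanx of each finger. Since Finger~1 and Finger~3 are opposable, we have obtained two tangent phalanxes of two opposable fingers, which is the definition of stability, proving the first claim. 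For the second claim I would observe that the two factors appearing in (\ref{grasp}) are precisely the direction vectors $\frac{t^{(i)}}{\rho^{K_i}} e^{-i\sum_{n=0}^{K_i} v_n^{(i)} \omega}$ of the two contact phalanxes (each multiplied by the positive scalars $t^{(i)}$ and $\rho^{-K_i}$, which do not affect the sign); inequality (\ref{grasp}) states that their scalar product is strictly negative, a fortiori non-positive, which by definition upgrades the stable configuration to a grasping one.

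The main obstacle I anticipate is making the tangency equivalence precise in both directions, and in particular disambiguating the parallelism condition: as printed it reads $\arg(\phi^{(i)}) = \arg(\dot\gamma)$, whereas the geometrically meaningful object is the argument of the phalanx \emph{direction} $\tfrac{d}{dt}\phi^{(i)}$, so I would restate it in that form at the outset. A related subtlety is that the equality of arguments should be understood modulo $\pi$, since it is the tangent \emph{line}, not the oriented tangent vector, that governs tangency. Finally, I would need to confirm that the passage from cylinder to cross-sectional ellipse introduces no spurious contacts, i.e.\ that a segment in $p^1$ tangent to $\gamma$ touches the cylinder from outside rather than crossing it; this holds because $\gamma$ bounds the convex cross-section of the branch inside $p^1$, so tangency to $\gamma$ is tangency to the solid cylinder as well.
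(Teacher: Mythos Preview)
Your proposal is correct and follows essentially the same route as the paper: the paper presents this proposition as an immediate consequence of the preceding discussion (``By the reasonings above we have following proposition''), and the subsequent remark simply observes that the two factors in (\ref{grasp}) are the orientations of the $K_i$-th phalanxes, so the grasping condition is the definition. Your write-up is more careful than the paper's own treatment --- in particular your observations about $\arg(\phi^{(i)})$ versus $\arg(\tfrac{d}{dt}\phi^{(i)})$, the modulo-$\pi$ reading of the parallelism condition, and the inside/outside issue are refinements the paper leaves implicit --- but the underlying argument is the same definitional unpacking.
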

\begin{remark}
 Both terms of the scalar product in (\ref{grasp}) represent the orientation of the $K_i$-th phalanx of the $i$-th finger, the grasping condition follows thus directly by the definition of grasping configuration.
\end{remark}
\section{Owl's foot}
Owls have zygodactyly feet ensuring a good grasp when perching or clutching a pray. They are characterized by the further ability of rotating the a third toe to the front when in flight - see Figure \ref{owl}. 
In particular, when attacking the pray, the talons are spread out wide to increase the chance of a successful strike. 
Owl's feet are also endowed with the so called digital tendon locking mechanism (TLM), common among bats too. When an object, say a perch or a pray, touches the base of the foot then TLM engages and 
keeps the toes locked around the object without the need for the muscles to be contracted (Quinn and  Baumel (1990)).
\begin{figure}
 \subfloat[Zygodactily configuration]{\includegraphics[scale=0.7]{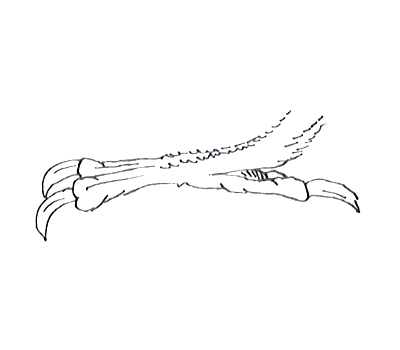}}\hskip0.5cm
 \subfloat[Isodactily configuration]{\includegraphics[scale=0.7]{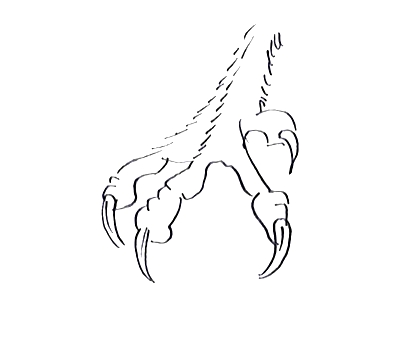}}
\caption{\label{owl} Owl's foot in two configurations, (A) is suitable for grasping, (B) is the typical flight configuration: as soon as a pray is targeted, phalanxes spread wide in order to 
maximize the chance of grabbing it.}
\end{figure}

From a mathematical point of view, TLM can be modeled as an hybrid system: when the first phalanx of any toe, namely the base of the foot, is not in contact with an object then the position of the phalanxes evolves according the control
dynamics described in previous section. If otherwise the first phalanx belongs to an appropriate region of $\RR^3$ denoted by $\mathcal O$ and representing an obstacle, say a pray or a branch, then TLM engages.
Since TLM is not a voluntary movement, then the corresponding dynamic is not controlled - see Figure.
\ref{owl}-(A).

\begin{figure}
\subfloat[TLM engagement for Finger 4]{ \includegraphics[scale=0.6]{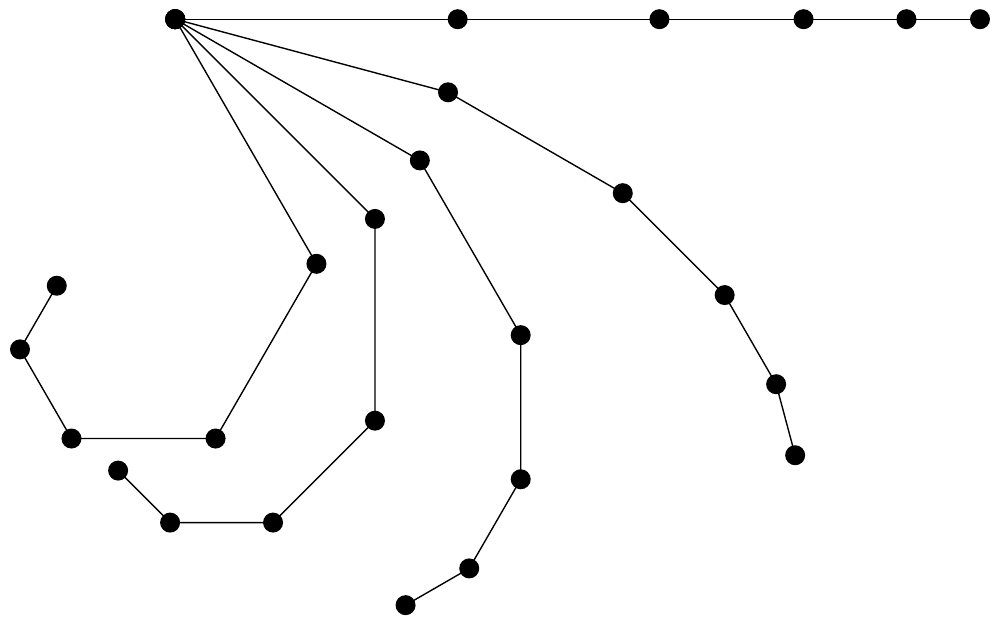}}\hskip0.5cm
\subfloat[Talon (i.e. extremal of the toe) trajectory]{ \includegraphics[scale=0.6]{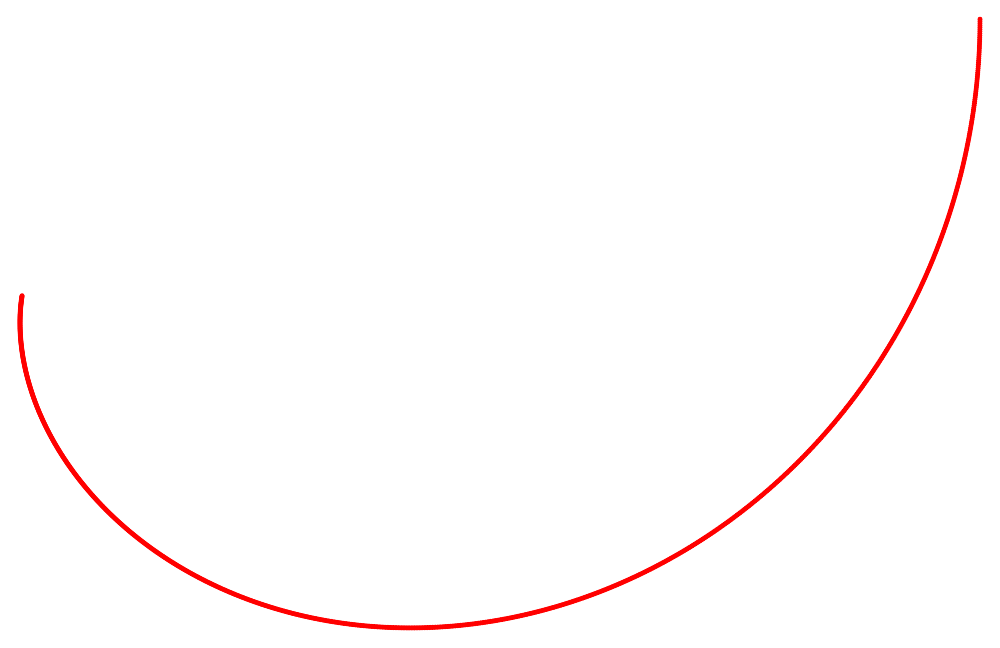}}
\caption{\label{tlm} Various stages of TLM engagement: all rotational controls are equal to $v(t)\in[0,\pi/6]$, $t\in[0,T]$, where $T$ is engagement time. }
\end{figure}
As in previous sections, let $x_{k}^{(h)}$ be the position of the $k$-th junction of the $h$-th toe of owl's foot and $u_{k}^{(h)}\in[0,1]$ be the corresponding rotation control. In our model phalanxes
are segments, in particular the first phalanx of the $h$-th toe is the set
$$P^{(h)}:=\left\{ a x_{1}^{(h)}\mid a \in [0,1]\right\}.$$
Also define the TLM map $v(t):[0,T]\mapsto [0,1]$ where $T$ is the is the engagement time, that is the time requested to the toes to contract when it touches an object $\mathcal O$, and  $v(t)$ is a continuous
 non-decreasing map. We have for every $h=1,\dots,4$
\begin{equation}
\begin{cases}
\displaystyle{x_{k}^{(h)}(t)= \sum_{j=1}^{k+1} \frac{1}{\rho^j} e^{-i\omega \sum_{n=1}^j u_{n}^{(h)}(t)}} \quad & \text{if } P^{(l)}\cap \mathcal O =\emptyset; l=1,\dots,4\\
\displaystyle{x_{k}^{(h)}(t)= \sum_{j=1}^{k+1} \frac{1}{\rho^j} e^{-i\omega j v(t)}}\quad & \text{otherwise}
\end{cases}
\end{equation}

\begin{remark}
 If $\mathcal O$ is convex then  $P^{(l)}\cap \mathcal O \not=\emptyset$ for some $l=1,\dots,4$ is satisfied for every $t\in[0,T]$. 
At time $T$ the tendon is locked and no further movement is allowed.
This is indeed the goal of such mechanism: to keep the toes contracted without the contribution of muscles. We finally remark that disengagement of TLM is yet  an open problem among biologists and zoologists.

\end{remark}

\end{document}